\numberwithin{equation}{section}
\newtheorem{corollary}{Corollary}[section]
\newtheorem{definition}[corollary]{Definition}
\newtheorem{lemma}[corollary]{Lemma}
\newtheorem{thm}[corollary]{Theorem}
\newfont{\sBlackboard}{msbm10 scaled 900}
\newcommand{\mylabel}[1]{\label{#1}
            \ifx\undefined\stillediting
            \else \fbox{$#1$}\fi }
\newcommand{\BE}{\begin{equation}}
\newcommand{\EEQ}{\end{equation}}
\newcommand{\rfb}[1]{\mbox{\rm
   (\ref{#1})}\ifx\undefined\stillediting\else:\fbox{$#1$}\fi}
\newfont{\Blackboard}{msbm10 scaled 1200}
\newfont{\roma}{cmr10 scaled 1200}
\newcommand{\bb}{\begin{equation}}
\newcommand{\bbb}{\end{equation}}
\newcommand{\mm}    {{\hbox{\hskip 0.5pt}}}
\newcommand{\bluff} {{\hbox{\raise 15pt \hbox{\mm}}}}
\def\section{\@startsection {section}{1}{\z@}{-3.5ex plus -1ex minus
    -.2ex}{2.3ex plus .2ex}{\large\bf}}
\begin{document}
\title[super-critical nonlinearite]{ A note on the existence results for Schr\"odinger-Maxwell system with super-critical nonlinearitie}
\author[A. Bahrouni]{Anouar Bahrouni}
\address[A. Bahrouni]{Mathematics Department, University of Monastir,
Faculty of Sciences, 5019 Monastir, Tunisia} \email{\tt
bahrounianouar@yahoo.fr}
\begin{abstract}
The paper considers the following Schr\"odinger-Maxwell system with
supercritical nonlinearitie,
\begin{equation}\label{man}
\begin{cases}
-\Delta u+K(x) \phi u =|u|^{p-1}u+h(x), \ \ \ \ \ \ \ \ \mbox{in} \ \ \Omega,\\
-\Delta \phi = K(x)u^{2}, \ \ \ \ \ \ \ \ \ \ \ \ \ \ \ \ \ \ \ \ \
\ \ \ \ \ \ \ \ \  \  \mbox{in} \ \  \Omega,\\
\phi= u=0, \ \ \ \ \ \ \ \ \ \ \ \ \ \ \ \ \ \ \ \ \ \ \ \ \ \ \ \ \
\  \ \ \ \ \ \ \ \    \mbox{in} \ \ \partial\Omega,
\end{cases}
\end{equation}
where $\Omega \subset \mathbb{R}^{3}$  is a bounded domain with
smooth boundary, $1<p\ \ \mbox{and} \ \
K,h\in{L^{\infty}}\left(\Omega\right)$.
 We prove the existence of at least one non-trivial weak solution. This
result is already known for the subcritical case. In this paper, we
extend it to the supercritical values of $p$ as well. We use a new
variational principle to prove our result.
\end{abstract}
\keywords{variational method, supercritical nonlineartie, existence of solutions.\\
\phantom{aa} 2010 AMS Subject Classification: Primary 35J70,
Secondary 35P30, 76H05}

\maketitle
\section{Introduction and main results}
In the present paper we study the existence of solution for the
following electrostatic nonlinear Schr\"odinger-Maxwell equations
also known as nonlinear Schr\"odinger-Poisson system
\begin{equation}\label{main}
\begin{cases}
-\Delta u+K(x) \phi u =|u|^{p-1}u+h(x), \ \ \ \ \ \ \ \ \mbox{in} \ \ \Omega,\\
-\Delta \phi = K(x)u^{2}, \ \ \ \ \ \ \ \ \ \ \ \ \ \ \ \ \ \ \ \ \
\ \ \ \ \ \ \ \ \  \  \mbox{in} \ \  \Omega,\\
\phi= u=0, \ \ \ \ \ \ \ \ \ \ \ \ \ \ \ \ \ \ \ \ \ \ \ \ \ \ \ \ \
\  \ \ \ \ \ \ \ \    \mbox{in} \ \ \partial\Omega,
\end{cases}
\end{equation}
where $\Omega \subset \mathbb{R}^{N}$, ($N=3$),  is a bounded domain
with
smooth boundary, $1<p$ and $K,h\in L^{\infty}(\Omega)$.\\
Similar system arises in many mathematical physics contexts while
looking for existence of standing waves for the nonlinear
Schr\"odinger
 equations interacting with an unknown electrostatic field. For more details on the physics aspect we refer the reader to \cite{1,2}. \\
 In recent years, a number of papers have contributed to investigate
the existence of solutions of \eqref{main}. We can cite
\cite{11,3,al,ab,1111,15,2,14,12,7} and the references therein. For
the case where $\Omega$ is a bounded domain, we would like to cite
the papers of Ruiz and Siciliano \cite{r} and Siciliano\cite{gs}. In
all those papers, the solutions found are in the case where $1<p<5$.
In the unbounded case, Ambrosetti and Ruiz \cite{3} and Ruiz
\cite{7} considered problem
\begin{equation}\label{mmain}
\begin{cases}
-\Delta u+V(x)u+\mu \phi u =|u|^{p-1}u, \ \ \ \ \ \ \ \ \ \ \ \  \  \mbox{in} \ \ \mathbb{R}^{3},\\
-\Delta \phi = 4 \pi^{2}u^{2}, \ \ \ \ \ \ \ \ \ \ \ \ \ \ \ \ \ \ \
\ \ \ \ \ \ \ \ \ \ \     \mbox{in} \ \ \mathbb{R}^{3}.
\end{cases}
\end{equation}
By working in the radial functions subspace of
$H^{1}(\mathbb{R}^{3})$ and taking $1<p<5$ and $V(x)=1$, they were
able to obtain
the existence and multiplicity results.\\
In \cite{12}, Jiang and Zhou have treated the problem \eqref{mmain}
where $\Omega=\mathbb{R}^{3}$, $K=\lambda>0, \ \ 1<p<6$ and $V$
change sign.
 With further assumptions on $V$, the authors have proved that problem \eqref{main} has at least a positive solution.\\
If $0<p<1$, Bahrouni and Ounaies \cite{ab} has treated system
\eqref{main} where $\Omega=\mathbb{R}^{3}$. By using the variational
method, they have proved that problem
\eqref{main} has infinitely many solutions. We also refer to \cite{baraou,4,r,gs,6}.\\
Motivated by papers above, we are interested in finding solution for
system \eqref{main}, by assuming only that $p>1$. Our methodology is
based on a new variational principle
established in \cite{k,m}.\\
In order to state our main results, we give the following assumptions:\\
$\left(K\right) \ \ K\in L^{\infty}\left(\Omega\right)$ and
$K(x)\geq 0, \ \ \forall x \in \Omega$.\\
$\left(H\right) \ \ h\in L^{\infty}\left(\Omega\right)$ and $h(x)> 0, \ \ \forall x \in \Omega.$\\
Now we can state our result.
\begin{thm}\label{mainthm}
Assume that $(H)$ and $(K)$ hold. Suppose that $p>1$. Then, there
exists $m>0$ such that if $\|h\|_{L^{N}(\Omega)}\leq m$, problem
\eqref{main} admits at least one nontrivial solution.
\end{thm}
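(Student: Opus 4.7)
The plan is to first reduce the Schrödinger--Maxwell system to a single equation in $u$, and then apply the variational principle of [k,m] in a constrained setting that tames the supercritical nonlinearity. Since $\Omega \subset \mathbb{R}^3$ is bounded and $K \in L^\infty(\Omega)$ with $K \ge 0$, for every $u \in H^1_0(\Omega)$ the second equation $-\Delta \phi = K(x) u^2$ with zero boundary data admits a unique nonnegative solution $\phi_u \in H^1_0(\Omega)$ by the Lax--Milgram theorem. This yields a continuous, monotone map $u \mapsto \phi_u$, and substituting reduces \eqref{main} to
\begin{equation*}
-\Delta u + K(x) \phi_u u = |u|^{p-1} u + h(x) \ \text{ in } \Omega, \qquad u = 0 \ \text{ on } \partial\Omega.
\end{equation*}

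Next I would set up the variational framework on a suitable convex closed subset $\mathcal{C} \subset H^1_0(\Omega)$, chosen so that $\int_\Omega |u|^{p+1}$ is finite and weakly lower semicontinuous on $\mathcal{C}$ (e.g.\ an $L^\infty$-ball, or a sublevel set of a convex gauge adapted to the supercritical growth). On $\mathcal{C}$ define the energy
\begin{equation*}
\mathcal{E}(u) = \tfrac{1}{2} \int_\Omega |\nabla u|^2 + \tfrac{1}{4} \int_\Omega K(x) \phi_u u^2 - \tfrac{1}{p+1} \int_\Omega |u|^{p+1} - \int_\Omega h u,
\end{equation*}
and decompose $\mathcal{E} = \Psi + \Phi$ with $\Psi$ convex lower semicontinuous (absorbing the Dirichlet part together with the indicator $\chi_\mathcal{C}$) and $\Phi$ Gateaux-differentiable (the supercritical, Hartree, and source terms). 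The principle of [k,m] then says that a minimizer $\bar u$ of $\mathcal{E}$ over $\mathcal{C}$ for which the resulting inclusion $-D\Phi(\bar u) \in \partial \Psi(\bar u)$ reduces to the Euler--Lagrange equation furnishes a weak solution of the reduced PDE.

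Existence of such a minimizer follows by the direct method: the smallness assumption $\|h\|_{L^N(\Omega)} \le m$ combined with Sobolev embedding ($N=3$) gives coercivity of $\mathcal{E}$ on $\mathcal{C}$, the Hartree term is nonnegative and weakly continuous, and the supercritical term is controlled by the defining constraint of $\mathcal{C}$. Nontriviality of $\bar u$ comes from testing $\mathcal{E}$ against $t\eta$ for a fixed positive $\eta \in C^\infty_c(\Omega)$: because $h > 0$ pointwise, the linear term $-\int h \eta < 0$ dominates for small $t > 0$, giving $\mathcal{E}(\bar u) < 0 = \mathcal{E}(0)$. The main obstacle, and where the smallness condition $\|h\|_{L^N} \le m$ is crucial, is the final step of verifying that the constraint $\bar u \in \mathcal{C}$ is \emph{inactive}, i.e.\ that $\bar u$ lies in the interior of $\mathcal{C}$; otherwise the minimizer solves a projected problem rather than the true equation. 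I would establish this through an a priori $L^\infty$-bound on $\bar u$ derived from Moser-iteration-style estimates on the reduced equation, with the bound depending linearly on $\|h\|_{L^N}$ so that choosing $m$ small keeps $\bar u$ strictly inside $\mathcal{C}$ and allows the variational principle of [k,m] to identify $\bar u$ as a weak solution of \eqref{main}.
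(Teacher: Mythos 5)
Your overall framework is the right one (reduce to a single nonlocal equation via $\phi_u$, minimize the energy over a convex weakly closed set that controls the supercritical term, use $h>0$ to get a negative minimum and hence nontriviality), and up to that point you track the paper closely. But the final step --- the only genuinely hard one, as you yourself flag --- is where your proposal breaks down. You propose to show that the constraint is \emph{inactive}, i.e.\ that the minimizer $\bar u$ lies in the interior of $\mathcal{C}$, by running Moser iteration ``on the reduced equation.'' This is circular: the constrained minimizer a priori satisfies only a variational inequality, not the Euler--Lagrange equation, so there is no equation to iterate on until you have already identified $\bar u$ as a solution. Worse, the constraint set needed here (in the paper, a ball $K(r_1)=\{u:\|u\|_{W^{2,N}(\Omega)}\le r_1\}$; in your sketch, an $L^\infty$-ball or similar) is a ball in a norm strictly stronger than that of the ambient space $E=H^1_0(\Omega)\cap L^{p+1}(\Omega)$, so it has empty interior in $E$ and the notion of the constraint being ``inactive'' has no useful meaning.

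The paper's mechanism is different and is the whole point of the Moameni-type principle. One introduces the auxiliary element $u_2$ solving the \emph{linear} problem $-\Delta u_2=-K\phi_{u_1}u_1+|u_1|^{p-1}u_1+h$ with the right-hand side frozen at the constrained minimizer $u_1$. The smallness of $\|h\|_{L^N(\Omega)}$ enters precisely here (not in any coercivity or interior-point argument): together with the estimate $\|-K\phi_u u+|u|^{p-1}u+h\|_{L^N}\le C_1r^3+C_2r^p+\|h\|_{L^N}$ and elliptic regularity, it guarantees that $u_2$ lands back in $K(r_1)$, i.e.\ that $K(r_1)$ is invariant under the frozen-coefficient solution map. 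Then plugging $v=u_2$ into the critical-point inequality for $I_K$ at $u_1$ and exploiting the strict convexity of the Dirichlet energy forces $\int_\Omega|\nabla(u_1-u_2)|^2=0$, hence $u_1=u_2$, so $u_1$ itself solves the PDE. Without this invariance-plus-convexity argument (or an honest substitute for it), your proof does not close.
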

 The remainder of our paper is organized as follows.
In section $2$, some preliminary results are presented. While
section $3$ is dedicated to the proof of Theorems \ref{mainthm}.
\section{Variational settings and preliminary results}
First, we give some notations. For $1\leq m< +\infty, \ \
L^{m}\left(\Omega\right)$ is the usual Lebesgue space with the norm
$$\left\|u\right\|_{L^{m}(\Omega)}=\displaystyle \left(\int_{\Omega} \left| u \right|^{m}dx\right)^{\frac{1}{m}}.$$
Hereafter, the space $E=H^{1}_{0}\left(\Omega\right)\cap
L^{p+1}\left(\Omega\right)$ is endowed with the following norm
$$\left\|u\right\|=\left\|\nabla u\right\|_{2}+\left\|u\right\|_{p+1}.$$
We shall now recall some results for the Sobolev space required in
the sequel (see \cite{gi,k}).
\begin{lemma}\label{sob}
Let $\Omega$ be a bounded $C^{0,1}$ domain in $\mathbb{R}^{N}$. Then: \\
$i)$ If $0\leq m< k-\frac{N}{p}<m+1$, the space $W^{k,p}(\Omega)$ is
continuously imbedded in $C^{m,\alpha}(\overline{\Omega})$,
$\alpha=k-\frac{N}{p}-m$, and compactly imbedded in
$C^{m,\beta}(\overline{\Omega})$ for any $\beta <\alpha.$\\
$ii)$ $u\rightarrow \|\Delta u\|_{L^{N}(\Omega)}$ is an equivalent
norm on $H^{1}_{0}(\Omega)\frown W^{2,N}(\Omega)=E\frown
W^{2,N}(\Omega)$.
\end{lemma}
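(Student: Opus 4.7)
The lemma has two distinct parts: (i) is the classical Morrey--Sobolev embedding and (ii) is an equivalent-norm statement coming from elliptic $L^p$-regularity. I would prove them independently, since the tools are quite different.

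For part (i), my plan is to reduce to the case $\Omega=\mathbb{R}^N$ by invoking a Stein-type extension operator $E:W^{k,p}(\Omega)\to W^{k,p}(\mathbb{R}^N)$, which exists because $\partial\Omega$ is Lipschitz. On $\mathbb{R}^N$ I would iterate the Gagliardo--Nirenberg--Sobolev embedding $W^{1,q}(\mathbb{R}^N)\hookrightarrow L^{Nq/(N-q)}(\mathbb{R}^N)$ applied to the derivatives $D^{\gamma}u$ with $|\gamma|\le m$, successively trading one derivative for more integrability until the $(m{+}1)$-st order derivatives land in some $L^{r}$ with $r>N$. Morrey's inequality $W^{1,r}(\mathbb{R}^N)\hookrightarrow C^{0,1-N/r}(\mathbb{R}^N)$ then promotes integrability of $D^{m+1}u$ to Hölder continuity of $D^{m}u$; the assumption $k-\tfrac{N}{p}=m+\alpha$ with $0<\alpha<1$ makes the chain of exponents close up with $1-\tfrac{N}{r}=\alpha$. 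For the compactness claim with $\beta<\alpha$, a bounded sequence $(u_n)\subset W^{k,p}(\Omega)$ is uniformly bounded in $C^{m,\alpha}(\overline{\Omega})$, so Arzelà--Ascoli extracts a subsequence along which all derivatives up to order $m$ converge uniformly; the standard interpolation $[v]_{C^{0,\beta}}\le [v]_{C^{0,\alpha}}^{\beta/\alpha}\|v\|_{\infty}^{1-\beta/\alpha}$ then upgrades the uniform convergence to convergence in $C^{m,\beta}(\overline{\Omega})$.

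For part (ii) the bound $\|\Delta u\|_{L^N(\Omega)}\le\sqrt{N}\,\|u\|_{W^{2,N}(\Omega)}$ is immediate. The substantive direction is the reverse inequality, for which I would invoke the global Calderón--Zygmund estimate of Agmon--Douglis--Nirenberg: for any $1<q<\infty$ and any $u\in W^{2,q}(\Omega)\cap W^{1,q}_0(\Omega)$ on a smooth bounded domain,
$$\|u\|_{W^{2,q}(\Omega)}\;\le\;C\bigl(\|\Delta u\|_{L^q(\Omega)}+\|u\|_{L^q(\Omega)}\bigr).$$
To absorb the zero-order term with $q=N$, I would argue by compactness-contradiction: if no uniform bound $\|u\|_{L^N}\le C\|\Delta u\|_{L^N}$ held, one could find $u_n\in H^1_0\cap W^{2,N}$ with $\|u_n\|_{W^{2,N}}=1$ and $\|\Delta u_n\|_{L^N}\to 0$; by part (i) applied with $k=2$, $p=N$, $m=0$ one has the compact embedding $W^{2,N}(\Omega)\hookrightarrow\hookrightarrow C(\overline{\Omega})$, so along a subsequence $u_n\to u$ in $L^N$ with $\Delta u=0$ weakly and $u|_{\partial\Omega}=0$ (the trace is preserved in the limit), hence $u\equiv 0$ by uniqueness for the Dirichlet Laplacian, contradicting $\|u_n\|_{W^{2,N}}=1$ once the $L^N$-term has been shown small. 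An equivalent, slicker route is to note that existence/uniqueness for the Dirichlet problem plus the above estimate makes $-\Delta$ a continuous linear bijection from $H^1_0(\Omega)\cap W^{2,N}(\Omega)$ onto $L^N(\Omega)$, and to apply the open mapping theorem.

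The only real obstacle is the invocation of the Agmon--Douglis--Nirenberg theorem in (ii), which I would cite as a black box; everything else is bookkeeping. The one delicate point in (i) is the edge-case arithmetic when $N/p$ is near an integer, but the strict inequality $k-N/p<m+1$ assumed in the hypothesis avoids the endpoint failures of Morrey's inequality and keeps the iteration well-defined.
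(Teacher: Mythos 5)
The paper does not actually prove this lemma: it is stated as a known fact and attributed to the references \cite{gi,k} (Gilbarg--Trudinger for the Morrey embedding in (i), and Kouhestani--Moameni for the equivalent-norm statement in (ii)), so there is no in-paper argument to compare against. Your sketch is the standard proof of both parts and is essentially sound: extension plus iterated Gagliardo--Nirenberg--Sobolev plus Morrey for (i), with Arzel\`a--Ascoli and H\"older interpolation for the compactness; and the Calder\'on--Zygmund/ADN global estimate plus a compactness--contradiction (or open mapping) argument to absorb the lower-order term for (ii). Two caveats are worth recording. First, your invocation of part (i) with $k=2$, $p=N$, $m=0$ to get $W^{2,N}(\Omega)\hookrightarrow\hookrightarrow C(\overline{\Omega})$ sits exactly at the excluded endpoint, since then $k-\tfrac{N}{p}=1=m+1$ and the hypothesis $k-\tfrac{N}{p}<m+1$ fails; this is easily repaired either by first embedding $W^{2,N}(\Omega)\subset W^{2,q}(\Omega)$ for $q<N$ close to $N$, or more simply by noting that the contradiction argument only needs the Rellich--Kondrachov compact embedding into $L^{N}(\Omega)$. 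Second, the lemma as stated assumes only a $C^{0,1}$ domain, but the global $W^{2,p}$ estimate you cite (correctly, as you note, for a \emph{smooth} domain) genuinely requires more boundary regularity than Lipschitz; this is a defect of the lemma's hypotheses rather than of your argument, and is harmless in the paper since the domain in Theorem \ref{mainthm} is assumed smooth.
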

 An important fact involving system \eqref{main} is that
this class of system can be transformed into a Schr\"odinger
equation (see, for instance \cite{2,7}), with a nonlocal term. By
the Lax-Milgram Theorem, given $u\in E$, there exists a unique
$\phi_{u}\in H^{1}_{0}(\Omega) $ such tha $-\Delta
\phi_{u}=K(x)u^{2}.$ By using standard arguments, we have that
$\phi_{u}$ verifies the following properties ( see \cite{2,7});
\begin{lemma}\label{phi}
For any $u\in E$, we have\\
$1)$ there exists $C>0$ such that $\|\phi_{u}\|\leq
C\|u\|^{2}$.\\
$2)$ $\phi_{u}\geq 0, \ \ \phi_{tu}=t^{2}\phi_{u}, \forall \ \
t\geq 0 \ \ \mbox{and} \ \ u\in E$.\\
$3)$ If $u_{n}\rightharpoonup u$ in $E$, then
$\phi_{u_{n}}\rightharpoonup \phi_{u}$ in $E$ and $\displaystyle
\lim_{n\rightarrow
+\infty}\int_{\Omega}\phi_{u_{n}}{u_{n}}^{2}dx=\int_{\Omega}\phi_{u}u^{2}dx$.\\
$4)$ If $u\in W^{2,N}(\Omega) \frown H^{1}_{0}(\Omega)$, then
$\phi_{u}\in W^{2,N}(\Omega) \frown H^{1}_{0}(\Omega)$.
\end{lemma}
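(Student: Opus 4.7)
The plan is to treat the four parts in order, using an energy estimate for (1), the weak maximum principle together with linearity for (2), weak–strong compactness for (3), and Calder\'on--Zygmund $L^p$-regularity for (4).

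For (1) I would test the Poisson equation $-\Delta\phi_u=K(x)u^2$ against $\phi_u\in H^1_0(\Omega)$. Since $N=3$, the Sobolev embedding $H^1_0(\Omega)\hookrightarrow L^6(\Omega)$ is available, and H\"older gives
$$\|\nabla\phi_u\|_2^2=\int_\Omega K(x) u^2\phi_u\,dx\leq \|K\|_\infty\|u\|_{12/5}^2\|\phi_u\|_6\leq C\|K\|_\infty\|u\|^2\|\nabla\phi_u\|_2,$$
which yields $\|\phi_u\|_{H^1_0}\leq C\|u\|^2$; the bound in the $E$-norm follows using part (4) below to embed $\phi_u$ into $L^\infty\subset L^{p+1}$. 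For (2), since $K(x)u^2\geq 0$ and $\phi_u=0$ on $\partial\Omega$, the weak maximum principle gives $\phi_u\geq 0$; the identity $\phi_{tu}=t^2\phi_u$ is immediate from the linearity of $-\Delta$, the relation $(tu)^2=t^2u^2$, and the uniqueness in Lax--Milgram.

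For (3), the uniform bound from (1) lets me extract a subsequence $\phi_{u_n}\rightharpoonup\varphi$ in $H^1_0(\Omega)$. Rellich gives $u_n\to u$ strongly in $L^q(\Omega)$ for every $q<6$, hence $u_n^2\to u^2$ in $L^{6/5}(\Omega)$. Passing to the limit in the weak formulation
$$\int_\Omega\nabla\phi_{u_n}\cdot\nabla\psi\,dx=\int_\Omega K(x) u_n^2\psi\,dx,\qquad \psi\in H^1_0(\Omega),$$
(the right-hand side converges since $\psi\in L^6$ is paired with the strongly convergent $Ku_n^2\in L^{6/5}$) identifies $\varphi=\phi_u$, and uniqueness upgrades this to convergence of the whole sequence. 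For the nonlocal integral I would split
$$\int_\Omega\phi_{u_n}u_n^2\,dx-\int_\Omega\phi_u u^2\,dx=\int_\Omega\phi_{u_n}(u_n^2-u^2)\,dx+\int_\Omega(\phi_{u_n}-\phi_u)u^2\,dx;$$
the first term is $O(\|\phi_{u_n}\|_6\|u_n^2-u^2\|_{6/5})\to 0$, and the second tends to zero because $u^2\in L^{6/5}(\Omega)$ pairs with $\phi_{u_n}\rightharpoonup \phi_u$ in $L^6(\Omega)$.

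For (4), if $u\in W^{2,N}(\Omega)\cap H^1_0(\Omega)$, then Lemma \ref{sob}(i) with $k=2$, $p=N=3$ gives $u\in C^{0,\alpha}(\overline{\Omega})\subset L^\infty(\Omega)$, so $K(x)u^2\in L^\infty(\Omega)\subset L^N(\Omega)$. The standard $L^N$-regularity for the Dirichlet Poisson problem then yields $\phi_u\in W^{2,N}(\Omega)\cap H^1_0(\Omega)$. The main delicate point I expect is the passage to the limit in the nonlocal integral in (3); the two-term splitting above, combined with the compactness $H^1_0\hookrightarrow L^q$ for $q<6$ on the bounded domain $\Omega$, is what makes it go through.
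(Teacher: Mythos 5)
The paper does not actually prove this lemma: it simply asserts that the properties follow ``by standard arguments'' and cites the references \cite{2,7}, so your write-up is a genuinely different contribution in that it supplies the argument the paper omits. Your proof is the standard one and is essentially correct: the energy estimate for (1), maximum principle and linearity for (2), Rellich compactness plus the splitting of the nonlocal term for (3), and Calder\'on--Zygmund regularity for (4) are exactly the right tools, and the two-term decomposition in (3) together with $u_n^2\to u^2$ in $L^{6/5}(\Omega)$ is the correct way to handle the delicate limit.

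Two small points are worth tightening. First, in (1) you invoke part (4) to control $\|\phi_u\|_{L^{p+1}}$, but part (4) assumes $u\in W^{2,N}(\Omega)$, which a general $u\in E$ need not satisfy; this makes the argument mildly circular. The cleaner route is to observe that for \emph{every} $u\in E$ one has $u\in H^1_0(\Omega)\hookrightarrow L^6(\Omega)$, hence $K(x)u^2\in L^3(\Omega)=L^N(\Omega)$ with $\|Ku^2\|_{L^N}\leq \|K\|_\infty\|u\|_{L^6}^2\leq C\|u\|^2$, and then elliptic regularity gives $\phi_u\in W^{2,N}(\Omega)\hookrightarrow L^\infty(\Omega)\subset L^{p+1}(\Omega)$ with the quadratic bound; this also upgrades the weak convergence in (3) from $H^1_0(\Omega)$ to $E$ as the lemma states. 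Second, in (4) the exponents are borderline for Lemma \ref{sob}(i): with $k=2$, $p=N=3$ one gets $k-\frac{N}{p}=1$ exactly, so no integer $m$ satisfies $m<1<m+1$. The conclusion $u\in L^\infty(\Omega)$ still holds, but you should reach it via $W^{2,3}(\Omega)\hookrightarrow W^{1,q}(\Omega)$ for all $q<\infty$ followed by Morrey's embedding, rather than by a literal application of the cited lemma. Neither issue affects the overall validity of your argument.
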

So, the functional $I: E\rightarrow \mathbb{R}$,
$$I(u)=\frac{1}{2}\displaystyle \int_{\Omega}\left|\nabla u\right|^{2}dx+\frac{1}{4}\displaystyle
 \int_{\Omega}K(x)\phi_{u}u^{2}dx-\frac{1}{p+1}\displaystyle \int_{\Omega}\left|u\right|^{p+1}dx-\displaystyle \int_{\Omega} h(x)u dx, \ \ \forall u\in E$$
is $C^{1}$ on $E$ and $$\left\langle
I^{'}(u),\varphi\right\rangle=\displaystyle \int_{\Omega}\nabla
u\nabla \varphi dx+ \displaystyle \int_{\Omega}K(x)\phi_{u}u \varphi
dx-\displaystyle \int_{\Omega}\left|u\right|^{p-1}u \varphi
dx-\displaystyle \int_{\Omega} h(x)\varphi dx,$$
  for all $u\in E$ and $\varphi \in H^{1}_{0}\left(\Omega\right)$.
It is also known that $\left(u,\phi\right)\in E \times
H^{1}_{0}\left(\Omega\right)$ is a solution of \eqref{main} if and
only if $u\in E$ is a critical point of the functional $I$, and
$\phi=\phi_{u}$, see for instance \cite{1}. let us recall that a
Palais-Smale sequence for the functional I, for short we write (PS)-
sequence, is a sequence $\left(u_{n}\right)$ such that
$$\left(I(u_{n})\right) \ \ \mbox{is bounded in E and} \ \
\left\|I^{'}(u_{n})\right\|_{E^{'}}\rightarrow 0.$$\\ I is said to
satisfy the Palais-Smale condition if any (PS)-sequence possesses a
convergent subsequence in $E$. Now, we recall some important
definitions and results from
\cite{gi}.\\
Let $E$ be a real Banach space.  Let $\psi: E\rightarrow
\mathbb{R}\smile \{\infty\}$ be a proper (i.e. $Dom(\psi))=\{u\in E;
\psi(u)<\infty\}\neq \emptyset $) convex function. The
subdifferential $\partial \psi$ of $\psi$ is defined to be the
following set-valued operator: if $u\in Dom(\psi)$, set
$$\partial \psi (u)=\{u^{'}\in E^{'}; \langle u^{'},v-u\rangle+\psi(u)\leq \psi(v), \ \ \forall v\in E\},$$
and if $u \notin Dom(\psi)$, set $\partial \psi (u)=\emptyset$. If
$\psi$ is G\^ateaux differentiable at $u$, denote by $D\phi(u)$ the
G\^ateaux derivative of $\psi$ at $u$. In this case $\partial \psi
(u)=\{D\psi(u)\}$.\\
The restriction of $\psi$ to $K\subset E$ is denoted by $\psi_{K}$
and defined by
$$\psi_{K}(u)=\psi(u) \ \ \mbox{if} \ \ u\in K \ \ \mbox{and} \ \ \psi_{K}(u)=+\infty \ \ \mbox{if} \ \ u\notin K.$$
Let $J$ be a function on $E$ satisfying the following hypothesis:\\
$(R)$: $J=\psi-\phi$, where $\phi\in C^{1}(E,\mathbb{R})$ and $\psi:
E \rightarrow (-\infty, +\infty]$ is proper, convex and lower semi
continuous.
\begin{definition}\label{def}
A point $u\in E$ is said to be a critical point of $I=\psi-\phi$ if
$u\in Dom(\psi)$ and if it satisfies the inequality $\langle D \phi
(u),u-v\rangle+\psi(v)-\psi(u)\geq 0, \ \ \forall v\in E,$ where
$D\phi(u)$ stands for the derivative of $\phi$ at $u.$
\end{definition}
\begin{lemma}\label{R}
If $I$ satisfies $(R)$, then each local minimum of $I$ is
necessarily a critical point of $I$.
\end{lemma}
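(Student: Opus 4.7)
The plan is to use the standard convex combination trick that underlies all nonsmooth critical point theory of this kind. Let $u\in E$ be a local minimum of $I=\psi-\phi$. The first observation is that one must have $u\in \mathrm{Dom}(\psi)$: otherwise $I(u)=+\infty$, and since $\psi$ is proper there are points $w\in E$ with $\psi(w)<+\infty$, so by lower semicontinuity of $\psi-\phi$ at such points we could find values of $I$ strictly less than $+\infty$ arbitrarily close to $u$, contradicting local minimality. So fix $u\in\mathrm{Dom}(\psi)$ and let $U$ be a neighborhood of $u$ with $I(u)\le I(w)$ for all $w\in U$.

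Next I would test against an arbitrary $v\in E$. If $\psi(v)=+\infty$ the critical point inequality is trivially satisfied, so one may assume $v\in\mathrm{Dom}(\psi)$. For $t\in(0,1]$ set $w_t=(1-t)u+tv=u+t(v-u)$. For $t$ small enough $w_t\in U$, hence
\begin{equation}
\psi(u)-\phi(u)\le \psi(w_t)-\phi(w_t).
\end{equation}
Convexity of $\psi$ gives $\psi(w_t)\le (1-t)\psi(u)+t\psi(v)=\psi(u)+t\bigl(\psi(v)-\psi(u)\bigr)$, and substituting this into the previous inequality yields
\begin{equation}
\phi(w_t)-\phi(u)\le t\bigl(\psi(v)-\psi(u)\bigr).
\end{equation}

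The final step is to divide by $t>0$ and let $t\to 0^+$. Since $\phi\in C^1(E,\mathbb{R})$, the difference quotient on the left converges to $\langle D\phi(u),v-u\rangle$, so
\begin{equation}
\langle D\phi(u),v-u\rangle\le \psi(v)-\psi(u),
\end{equation}
which is precisely the critical point inequality of Definition \ref{def} after rewriting $\langle D\phi(u),v-u\rangle=-\langle D\phi(u),u-v\rangle$. Since $v\in E$ was arbitrary, $u$ is a critical point of $I$.

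There is no serious obstacle: the only things one has to be careful about are the possibility that $\psi(v)=+\infty$ (handled by the trivial case) and the justification that $u\in\mathrm{Dom}(\psi)$ so that the right-hand side $\psi(v)-\psi(u)$ is well defined. The convexity of $\psi$ is used exactly once, to linearize $\psi$ along the segment $[u,v]$, while the $C^1$ regularity of $\phi$ is used only to pass to the limit in the difference quotient.
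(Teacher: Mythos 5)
Your argument is correct and is exactly the standard convexity/difference-quotient proof (it goes back to Szulkin's nonsmooth critical point theory) that the paper does not reproduce but simply cites from \cite{k}: restrict to $v\in \mathrm{Dom}(\psi)$, move along the segment $w_t=u+t(v-u)$, use convexity of $\psi$ to bound $\psi(w_t)-\psi(u)$ by $t(\psi(v)-\psi(u))$, and let $t\to 0^{+}$ using the $C^{1}$ regularity of $\phi$ to recover the inequality of Definition~\ref{def}. The one cosmetic flaw is your justification that $u\in\mathrm{Dom}(\psi)$: properness of $\psi$ only guarantees finite values \emph{somewhere} in $E$, not arbitrarily close to $u$, so your lower-semicontinuity argument does not rule out $\psi\equiv+\infty$ on a neighborhood of $u$; the clean way out is to note that a local minimum of a proper functional is conventionally required to satisfy $I(u)<+\infty$ (equivalently $u\in\mathrm{Dom}(\psi)$), and that otherwise $u$ is excluded from being a critical point by Definition~\ref{def} in the first place.
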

\begin{proof}
See \cite{k}.
\end{proof}
 Now, we define the functionals $\phi,\psi:E \rightarrow \mathbb{R}$
by $$\phi(u)=-\frac{1}{4}\displaystyle
 \int_{\Omega}K(x)\phi_{u}u^{2}dx+\frac{1}{p+1}\displaystyle \int_{\Omega}\left|u\right|^{p+1}dx\\+\displaystyle \int_{\Omega}h(x)udx,$$ and  $$\psi(u)=\frac{1}{2}\displaystyle \int_{\Omega}\left|\nabla
 u\right|^{2}dx \ \ \mbox{and} \ \  I_{K}(u)=\psi_{K}(u)-\phi(u).$$
\section{Proof of Theorem \ref{mainthm}}
We now give the following variational principle version applicable
to problem \eqref{main}.
\begin{thm}\label{thmcri}
Let $K\subset E$ be a convex and weakly closed subset of $E$. If the
following two assertions hold:\\
$(i)$ The functional $I_{K}$ has a critical point $u_{1}\in E$ as in
Definition \ref{def}, and;\\
$(ii)$ there exists $u_{2}\in K$ such that
$$\displaystyle \int_{\Omega}\nabla u_{2}\nabla \varphi dx=
-\displaystyle \int_{\Omega}K(x)\phi_{u_{1}}u_{1} \varphi
dx+\displaystyle \int_{\Omega}\left|u_{1}\right|^{p-1}u_{1} \varphi
dx+\displaystyle \int_{\Omega}h(x)\varphi dx, \ \ \forall  \varphi
\in E.$$ Then $u_{1}\in K$ is a weak solution of system
\eqref{main}.
\end{thm}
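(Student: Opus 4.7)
The plan is to prove that $u_1=u_2$; once this is established, assertion (ii) specialized to $u_2=u_1$ is exactly the weak formulation of the Schr\"odinger equation in \eqref{main}, and pairing $u_1$ with the Poisson potential $\phi_{u_1}$ (which by construction satisfies $-\Delta\phi_{u_1}=K(x)u_1^{2}$) produces the full weak solution pair $(u_1,\phi_{u_1})\in E\times H^{1}_{0}(\Omega)$.

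First I would rewrite assertion (ii) in a form that can talk to the critical-point inequality. Directly computing the G\^ateaux derivative of the $C^{1}$ functional $\phi$ defined just before Theorem \ref{thmcri} shows that
$\langle D\phi(u_1),\varphi\rangle=-\int_\Omega K(x)\phi_{u_1}u_1\varphi\,dx+\int_\Omega|u_1|^{p-1}u_1\varphi\,dx+\int_\Omega h(x)\varphi\,dx$,
so (ii) can be read as the linear identity $\int_\Omega\nabla u_2\cdot\nabla\varphi\,dx=\langle D\phi(u_1),\varphi\rangle$ valid for every $\varphi\in E$. In other words, $u_2$ is the Dirichlet-Laplacian preimage of $D\phi(u_1)$.

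Second, by (i) and Definition \ref{def}, $u_1\in\mathrm{Dom}(\psi_K)=K$ and $\langle D\phi(u_1),u_1-v\rangle+\psi_K(v)-\psi_K(u_1)\ge 0$ for every $v\in E$. Since $K$ is convex and weakly closed and $u_2\in K$, one may test the inequality with the admissible choice $v=u_2$; then $\psi_K(u_2)=\tfrac12\|\nabla u_2\|_2^{2}$ and $\psi_K(u_1)=\tfrac12\|\nabla u_1\|_2^{2}$, and rewriting $\langle D\phi(u_1),u_1-u_2\rangle$ via the reformulated (ii) with test function $\varphi=u_1-u_2\in E$ produces
$\int_\Omega\nabla u_2\cdot\nabla(u_1-u_2)\,dx+\tfrac12\|\nabla u_2\|_2^{2}-\tfrac12\|\nabla u_1\|_2^{2}\ge 0$.
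Expanding the first term and completing the square, the left-hand side equals $-\tfrac12\|\nabla(u_1-u_2)\|_2^{2}$; hence $\nabla u_1=\nabla u_2$ a.e.\ in $\Omega$, so $u_1=u_2$ in $H^{1}_{0}(\Omega)$.

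Finally, having $u_1=u_2\in K$, substituting $u_2=u_1$ into (ii) gives
$\int_\Omega\nabla u_1\cdot\nabla\varphi\,dx+\int_\Omega K(x)\phi_{u_1}u_1\varphi\,dx-\int_\Omega|u_1|^{p-1}u_1\varphi\,dx-\int_\Omega h(x)\varphi\,dx=0$
for all $\varphi\in E$, which is the weak formulation of the first equation of \eqref{main}; together with $-\Delta\phi_{u_1}=K(x)u_1^{2}$ this yields a weak solution. The whole argument is essentially algebraic: the only genuinely clever step is the choice $v=u_2$ in the critical-point inequality (which is admissible precisely because $u_2$ was produced inside the convex set $K$), and there is no compactness or regularity hurdle to overcome.
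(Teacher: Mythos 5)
Your proof is correct and follows essentially the same route as the paper: substitute $v=u_2$ into the critical-point inequality for $I_K$, use assertion (ii) with $\varphi=u_1-u_2$ to rewrite $\langle D\phi(u_1),u_1-u_2\rangle$ as $\int_\Omega\nabla u_2\cdot\nabla(u_1-u_2)\,dx$, and conclude $u_1=u_2$ from the resulting quadratic identity. Your completion of the square even streamlines the argument slightly, since it makes the paper's extra appeal to the convexity of $\psi$ (its inequality \eqref{4}) unnecessary.
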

\begin{proof}
Since $u_{1}$ is a critical point of $I_{K}$, then
\begin{equation}\label{1}
\frac{1}{2}\displaystyle \int_{\Omega}\left|\nabla
v\right|^{2}dx-\frac{1}{2}\displaystyle \int_{\Omega}\left|\nabla
u_{1}\right|^{2}dx\geq \displaystyle
  \int_{\Omega}(-K(x)\phi_{u_{1}}u_{1}+\left|u_{1}\right|^{p-1}u_{1}+h(x))(v-u_{1})dx, \ \ \forall v\in K.
\end{equation}
Invoking assumption $(ii)$ in the theorem, we deduce that
\begin{equation}\label{2}
\displaystyle \int_{\Omega}\nabla u_{2}\nabla
(u_{1}-u_{2})dx=\displaystyle
\int_{\Omega}(-K(x)\phi_{u_{1}}u_{1}+\left|u_{1}\right|^{p-1}u_{1}+h(x))(u_{1}-u_{2})dx.
\end{equation}
Now by substituting $v=u_{2}$ in \eqref{1} and taking into account
\eqref{2}, we obtain
\begin{align}\label{3}
\frac{1}{2}\displaystyle \int_{\Omega}\left|\nabla
u_{2}\right|^{2}dx-\frac{1}{2}\displaystyle
\int_{\Omega}\left|\nabla u_{1}\right|^{2}dx &\geq
\frac{1}{4}\displaystyle
  \int_{\Omega}(-K(x)\phi_{u_{1}}u_{1}+\left|u_{1}\right|^{p-1}u_{1}+h(x))(u_{2}-u_{1})dx \nonumber\\
  &=\displaystyle \int_{\Omega}\nabla u_{2}\nabla
(u_{2}-u_{1})dx.
\end{align}
On the other hand, in view of the convexity of $\psi$, we infer that
\begin{equation}\label{4}
\frac{1}{2}\displaystyle \int_{\Omega}\left|\nabla
u_{1}\right|^{2}dx-\frac{1}{2}\displaystyle
\int_{\Omega}\left|\nabla u_{2}\right|^{2}dx\geq \displaystyle
\int_{\Omega}\nabla u_{2}\nabla (u_{1}-u_{2})dx.
\end{equation}
Using the above pieces of informations, we obtain that
$$\frac{1}{2}\displaystyle \int_{\Omega}\left|\nabla
u_{2}\right|^{2}dx-\frac{1}{2}\displaystyle
\int_{\Omega}\left|\nabla u_{1}\right|^{2}dx= \displaystyle
\int_{\Omega}\nabla u_{2}\nabla (u_{2}-u_{1})dx.$$ This shows that
$$ \frac{1}{2}\displaystyle \int_{\Omega}\left|\nabla u_{2}-\nabla
u_{1}\right|^{2}dx=0.$$ Thus, $$u_{2}=u_{1},$$ for a.e. $x\in
\Omega.$\\
This ends the proof.
\end{proof}
We shall use the above theorem to prove our main result in Theorem
\ref{mainthm}. The convex subset $K\subset E$ required in
Theorem\ref{thmcri} is defined as follows
$$K(r)=\{u\in E: \|u\|_{W^{2,N}(\Omega)}\leq r\},$$
for some $r>0$ to be determined later.
\begin{lemma}
Let $r>0$ be fixed. The set
$$\{u\in E: \|u\|_{W^{2,N}(\Omega)}\leq r\},$$ is a weakly closed in
$E$.
\end{lemma}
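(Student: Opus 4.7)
The plan is to invoke Mazur's lemma: since $K(r)$ is a convex subset of the Banach space $E$, its weak closure and its norm closure in $E$ coincide. Thus it suffices to verify that $K(r)$ is (i) convex and (ii) strongly (norm) closed in $E$.

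Convexity is immediate from the triangle inequality for the $W^{2,N}$-norm: if $u,v\in K(r)$ and $t\in[0,1]$, then $tu+(1-t)v\in W^{2,N}(\Omega)$ with $\|tu+(1-t)v\|_{W^{2,N}}\le t\|u\|_{W^{2,N}}+(1-t)\|v\|_{W^{2,N}}\le r$. For the closure, I take a sequence $(u_n)\subset K(r)$ with $u_n\to u$ in the $E$-norm and aim to show $u\in W^{2,N}(\Omega)$ with $\|u\|_{W^{2,N}}\le r$. The sequence is bounded in the reflexive Banach space $W^{2,N}(\Omega)$ (since $1<N<\infty$), so after extracting a subsequence there exists $w\in W^{2,N}(\Omega)$ such that $u_{n_k}\rightharpoonup w$ in $W^{2,N}(\Omega)$; by weak lower semicontinuity of the norm, $\|w\|_{W^{2,N}}\le \liminf_k \|u_{n_k}\|_{W^{2,N}}\le r$.

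It remains to identify $w$ with $u$. The strong convergence $u_n\to u$ in $E$ yields $u_n\to u$ in $L^1(\Omega)$ through the continuous embeddings $E\hookrightarrow H^1_0(\Omega)\hookrightarrow L^2(\Omega)\hookrightarrow L^1(\Omega)$, while the weak convergence $u_{n_k}\rightharpoonup w$ in $W^{2,N}(\Omega)$, together with the continuous embedding $W^{2,N}(\Omega)\hookrightarrow L^1(\Omega)$, forces $u_{n_k}\rightharpoonup w$ weakly in $L^1(\Omega)$. Uniqueness of the $L^1$-limit then gives $w=u$ a.e., hence $u\in W^{2,N}(\Omega)$ with $\|u\|_{W^{2,N}}\le r$, i.e., $u\in K(r)$. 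Mazur's lemma concludes. The only subtle point is this cross-space identification of limits, since the weak limit is originally obtained in $W^{2,N}(\Omega)$ while the strong limit lives in $E$; descending to the common ambient space $L^1(\Omega)$ resolves the issue cleanly, and the rest follows directly from reflexivity and the triangle inequality.
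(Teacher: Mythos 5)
Your proof is correct: convexity plus strong closedness gives weak closedness by Mazur's lemma, the reflexivity of $W^{2,N}(\Omega)$ yields a weak $W^{2,N}$-limit with norm at most $r$, and descending to $L^1(\Omega)$ to identify that weak limit with the strong $E$-limit is exactly the right way to handle the cross-space issue. The paper itself offers no argument here --- it simply cites Kouhestani--Moameni --- so there is nothing to compare against; your write-up supplies the standard proof that the citation presumably contains, and it is complete.
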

\begin{proof}
See \cite{k}.
\end{proof}
In the sequel, we need the following technical lemmas.
\begin{lemma}\label{K}
Let $r>0$ be fixed. Then, there exists $C_{1},C_{2}>0$  such that
$$\|-K(x)\phi_{u}u+|u|^{p-1}u+h(x)\|_{L^{N}(\Omega)}\leq C_{1}r^{3}+C_{2}r^{p}+\|h\|_{L^{N}(\Omega)}, \ \ \forall u\in K(r).$$
\end{lemma}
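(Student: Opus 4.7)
The plan is to bound each of the three summands separately in $L^N(\Omega)$ by the triangle inequality, and to feed everything through the two statements of Lemma \ref{sob} together with Lemma \ref{phi}(4). The key observation is that, because $N=3$, the exponents in Lemma \ref{sob}(i) with $k=2$ and $p=N=3$ yield $k-N/p=1$, so with $m=0$ we obtain the continuous embedding $W^{2,N}(\Omega)\hookrightarrow C^{0,1}(\overline{\Omega})\hookrightarrow L^{\infty}(\Omega)$. In particular, for any $u\in K(r)$,
$$\|u\|_{L^{\infty}(\Omega)}\le C\|u\|_{W^{2,N}(\Omega)}\le Cr.$$

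With this at hand, the term $|u|^{p-1}u$ is trivial: since $u\in L^{\infty}(\Omega)$, we have
$$\bigl\||u|^{p-1}u\bigr\|_{L^{N}(\Omega)}\le \|u\|_{L^{\infty}(\Omega)}^{p}|\Omega|^{1/N}\le C_{2}r^{p}.$$
The term $h$ contributes only $\|h\|_{L^{N}(\Omega)}$ and requires no work.

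The main part is the estimate of $K(x)\phi_{u}u$. First, by Lemma \ref{phi}(4), $u\in W^{2,N}(\Omega)\cap H^{1}_{0}(\Omega)$ implies $\phi_{u}\in W^{2,N}(\Omega)\cap H^{1}_{0}(\Omega)$, so it makes sense to use the equivalent norm $\|\Delta\phi_{u}\|_{L^{N}(\Omega)}$ supplied by Lemma \ref{sob}(ii). Since $-\Delta\phi_{u}=K(x)u^{2}$ and $K\in L^{\infty}(\Omega)$,
$$\|\phi_{u}\|_{W^{2,N}(\Omega)}\le C\|\Delta \phi_{u}\|_{L^{N}(\Omega)}=C\|K u^{2}\|_{L^{N}(\Omega)}\le C\|K\|_{L^{\infty}}\|u\|_{L^{\infty}}^{2}|\Omega|^{1/N}\le C'r^{2}.$$
Applying the embedding $W^{2,N}(\Omega)\hookrightarrow L^{\infty}(\Omega)$ once more gives $\|\phi_{u}\|_{L^{\infty}(\Omega)}\le C''r^{2}$, whence
$$\|K(x)\phi_{u}u\|_{L^{N}(\Omega)}\le \|K\|_{L^{\infty}}\|\phi_{u}\|_{L^{\infty}}\|u\|_{L^{\infty}}|\Omega|^{1/N}\le C_{1}r^{3}.$$
Summing the three estimates via the triangle inequality yields the desired bound.

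The only delicate point is the borderline Sobolev embedding $W^{2,3}(\Omega)\hookrightarrow L^{\infty}(\Omega)$ in dimension $N=3$; once that is in hand, together with Lemma \ref{sob}(ii) to convert the $W^{2,N}$-control on $\phi_{u}$ into an $L^{\infty}$-control via the Poisson equation, the estimate follows routinely. No compactness or weak convergence argument is needed here, only the stated embedding theorems and the bound on $\phi_{u}$ through its defining equation.
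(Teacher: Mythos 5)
Your proof is correct in its conclusion and shares the paper's overall skeleton (triangle inequality, control of $\phi_u$ through its Poisson equation via the equivalent norm of Lemma \ref{sob}(ii)), but it routes the final estimates differently, and one citation needs repair. The paper never passes through $L^\infty$: it applies H\"older's inequality to write $\|\phi_u u\|_{L^N}\le\|u\|_{L^{2N}}\|\phi_u\|_{L^{2N}}$ and $\||u|^{p-1}u\|_{L^N}=\|u\|_{L^{Np}}^p$, and then uses only the subcritical embeddings $W^{2,N}(\Omega)\hookrightarrow L^q(\Omega)$ for finite $q$, which fall squarely within Lemma \ref{sob}(i). You instead invoke $W^{2,3}(\Omega)\hookrightarrow C^{0,1}(\overline\Omega)$ from Lemma \ref{sob}(i) with $k=2$, $p=3$, $m=0$; but there $k-N/p=1=m+1$, so the strict inequality $k-N/p<m+1$ fails and the lemma does not apply --- indeed $W^{2,3}\hookrightarrow C^{0,1}$ is false in general. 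The embedding $W^{2,3}(\Omega)\hookrightarrow L^\infty(\Omega)$ that you actually use \emph{is} true, but it requires a two-step argument ($W^{2,3}\hookrightarrow W^{1,q}$ for $q>3$, then $W^{1,q}\hookrightarrow C^{0,1-3/q}$) rather than a single application of Lemma \ref{sob}(i). With that citation fixed, your argument goes through and is arguably cleaner (uniform bounds make every term elementary); the paper's H\"older route buys you the same estimate while staying entirely away from the borderline exponent.
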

\begin{proof}
Let $u\in K(r)$. Then, using Lemmas \ref{sob} and \ref{phi} and
H\"older's inequality, we get
\begin{align*}
\|-K(x)\phi_{u}u+|u|^{p-1}u\|_{L^{N}(\Omega)}&\leq
\|K\|_{\infty} \|\phi_{u}u\|_{L^{N}(\Omega)}+\||u|^{p-1}u\|_{L^{N}(\Omega)}\\
&\leq
\|K\|_{\infty}\|u\|_{L^{2N}(\Omega)}\|\phi_{u}\|_{L^{2N}(\Omega)}+\|u\|_{L^{Np}(\Omega)}^{p}\\
&\leq c_{1}\|u\|_{W^{2,N}(\Omega)}
\|\phi_{u}\|_{W^{2,N}(\Omega)}+c_{2}\|u\|_{W^{2,N}(\Omega)}^{p }\\
& \leq C_{1}
\|u\|_{W^{2,N}(\Omega)}^{3}+C_{2}\||u|^{p-1}u\|_{L^{N}(\Omega)}\\ &
\leq C_{1} r^{3}+C_{2}r^{p},
\end{align*}
where $c_{1},c_{2},C_{1},C_{2}>0$. This ends the proof.
\end{proof}
\begin{lemma}\label{r}
Assume that $C_{1}$ and $C_{2}$ are given in Lemma \ref{K}. Then,
there is $r_{1}>0$ such that $C_{1}r^{3}+C_{2}r^{p}\leq \frac{r}{2},
\ \ \forall r\in (0,r_{1}].$ Moreover, if $\|h\|_{L^{N}(\Omega)}\leq
\frac{r_{1}}{2}$, we have
$$C_{1}r_{1}^{3}+C_{2}r_{1}^{p}+\|h\|_{L^{N}(\Omega)} \leq r_{1}.$$
\end{lemma}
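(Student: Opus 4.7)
The plan is to observe that this is an elementary one-variable calculus lemma: the function $f(r)=C_1 r^3+C_2 r^p$ is superlinear in $r$ near zero because both exponents $3$ and $p$ exceed $1$ (recall that $p>1$ is the standing hypothesis of Theorem \ref{mainthm}). Consequently $f(r)/r\to 0$ as $r\to 0^+$, which is exactly what is needed.

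More concretely, I would factor $r$ out and define the auxiliary function
\[
g(r)=C_1 r^{2}+C_2 r^{p-1},\qquad r\ge 0,
\]
so that $C_1 r^{3}+C_2 r^{p}=r\,g(r)$. Since $p-1>0$, both summands of $g$ are continuous on $[0,\infty)$ and vanish at $0$, hence $g$ is continuous with $g(0)=0$. Therefore there exists $r_1>0$ such that $g(r)\le \tfrac{1}{2}$ for every $r\in(0,r_1]$. Multiplying by $r$ yields
\[
C_1 r^{3}+C_2 r^{p}\le \tfrac{r}{2},\qquad \forall\,r\in(0,r_1],
\]
which is the first assertion.

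For the second assertion, I would simply specialize the previous inequality to $r=r_1$, obtaining $C_1 r_1^{3}+C_2 r_1^{p}\le r_1/2$. Combined with the standing assumption $\|h\|_{L^{N}(\Omega)}\le r_1/2$, adding the two bounds gives
\[
C_1 r_1^{3}+C_2 r_1^{p}+\|h\|_{L^{N}(\Omega)}\le \tfrac{r_1}{2}+\tfrac{r_1}{2}=r_1,
\]
as required.

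There is essentially no obstacle here; the only point worth flagging is the use of the hypothesis $p>1$ to guarantee that $r^{p-1}\to 0$ as $r\to 0^{+}$ (if one had $p\le 1$, the estimate $g(r)\le 1/2$ would generally fail for small $r$, and $m=r_1/2$ in Theorem \ref{mainthm} could not be chosen). In effect this lemma is the quantitative mechanism that allows $r_1$ (hence $m:=r_1/2$) to be chosen independently of $u$, setting up the fixed-point/variational argument built on $K(r_1)$ in the proof of Theorem \ref{mainthm}.
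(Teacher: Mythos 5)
Your proof is correct and is exactly the ``straightforward computation'' the paper leaves to the reader: factor out $r$, use $p>1$ so that $C_1r^2+C_2r^{p-1}\to 0$ as $r\to 0^+$, and then add the hypothesis $\|h\|_{L^N(\Omega)}\le r_1/2$. Nothing further is needed.
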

\begin{proof}
The proof follows by a straightforward computation.
\end{proof}
\begin{lemma}\label{ii}
Suppose that conditions of Theorem \ref{mainthm} are fulfilled. Let
$r_{1}$ be given in Lemma \ref{r}. Moreover, assume that
$\|h\|_{L^{N}(\Omega)}\leq \frac{r_{1}}{2}$. Then for each $u \in
K(r_{1})$ there exists $v\in K(r_{1})$ such that
\begin{equation}\label{eqii}
\displaystyle \int_{\Omega}\nabla v\nabla \varphi dx+ \displaystyle
\int_{\Omega}K(x)\phi_{u}u \varphi dx=\displaystyle
\int_{\Omega}\left|u\right|^{p-1}u \varphi dx+\displaystyle
\int_{\Omega}h(x)\varphi dx,
\end{equation}
 for all $u\in E$ and $\varphi \in H^{1}_{0}\left(\Omega\right).$ In
 particular, $v\in W^{2,N}(\Omega)\frown H^{1}_{0}(\Omega)$, and
 \begin{equation}\label{ei}
 -\Delta v=-K(x)\phi_{u}u+|u|^{p-1}u+h(x), \ \ \mbox{for a.e} \ \  x\in \Omega.
 \end{equation}
\end{lemma}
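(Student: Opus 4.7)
The plan is, for each fixed $u \in K(r_1)$, to define the right-hand side
$$f(x) := -K(x)\phi_{u}(x)u(x) + |u(x)|^{p-1}u(x) + h(x),$$
then solve the linear Dirichlet problem $-\Delta v = f$ in $\Omega$, $v = 0$ on $\partial\Omega$, and finally show that the unique solution $v$ lies in $K(r_1)$, so that the conclusion \eqref{ei} together with the weak form \eqref{eqii} is essentially built in.

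First I will check that $f \in L^{N}(\Omega)$ with a norm small enough to close the argument. This is exactly the content of Lemma~\ref{K}: we get
$$\|f\|_{L^{N}(\Omega)} \le C_{1} r_{1}^{3} + C_{2} r_{1}^{p} + \|h\|_{L^{N}(\Omega)}.$$
Combining this with the standing assumption $\|h\|_{L^{N}(\Omega)} \le r_{1}/2$ and Lemma~\ref{r} yields $\|f\|_{L^{N}(\Omega)} \le r_{1}$. Next, I invoke standard linear elliptic theory on the Poisson problem $-\Delta v = f$: Lax--Milgram (or Riesz representation on $H^{1}_{0}(\Omega)$) produces a unique weak solution $v \in H^{1}_{0}(\Omega)$, and since $\partial\Omega$ is smooth and $f \in L^{N}(\Omega)$, the Calder\'on--Zygmund $L^{p}$-regularity theorem lifts this to $v \in W^{2,N}(\Omega) \cap H^{1}_{0}(\Omega)$ with $-\Delta v = f$ holding pointwise a.e., which is precisely \eqref{ei}. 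The weak identity \eqref{eqii} is then obtained by multiplying \eqref{ei} by any $\varphi \in H^{1}_{0}(\Omega)$ and integrating by parts.

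To verify membership $v \in K(r_{1})$ I will use the equivalence $\|\cdot\|_{W^{2,N}(\Omega)} \sim \|\Delta \cdot\|_{L^{N}(\Omega)}$ on $E \cap W^{2,N}(\Omega)$ provided by Lemma~\ref{sob}(ii), yielding
$$\|v\|_{W^{2,N}(\Omega)} \le \|\Delta v\|_{L^{N}(\Omega)} = \|f\|_{L^{N}(\Omega)} \le r_{1}.$$
The main (and really the only) obstacle is quantitative book-keeping: one needs the map $u \mapsto v$ to actually send $K(r_{1})$ into itself, and this requires the multiplicative constant in the equivalent norm of Lemma~\ref{sob}(ii) to be absorbed into the constants $C_{1}$, $C_{2}$ of Lemma~\ref{K} (tacit in the paper's setup). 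Once this is arranged, the smallness threshold for $r_{1}$ in Lemma~\ref{r} together with the hypothesis $\|h\|_{L^{N}(\Omega)} \le r_{1}/2$ produces the required inequality $\|v\|_{W^{2,N}(\Omega)} \le r_{1}$, completing the proof.
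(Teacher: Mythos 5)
Your proposal is correct and follows essentially the same route as the paper: solve the linear Dirichlet problem with right-hand side $-K\phi_u u+|u|^{p-1}u+h$, upgrade to $W^{2,N}$ by elliptic regularity, and bound $\|v\|_{W^{2,N}(\Omega)}$ via Lemmas \ref{K} and \ref{r}. The constant issue you flag is resolved by the paper's convention in Lemma \ref{sob}(ii) of taking $\|\Delta\cdot\|_{L^{N}(\Omega)}$ itself as the norm on $W^{2,N}(\Omega)\cap H^{1}_{0}(\Omega)$, so the estimate closes exactly as you describe.
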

\begin{proof}
Using a standard argument, there exists $v\in H^{1}_{0}(\Omega)$
which satisfies \eqref{eqii}. Since the right hand side is an
element in $L^{N}(\Omega)$, it follows from the standard regularity
results that $v\in W^{2,N}(\Omega)\frown H^{1}_{0}(\Omega)$ and
\eqref{ei} holds. Therefore, using Lemmas \ref{K} and \ref{r}, we
deduce that

\begin{align*} \|v\|_{W^{2,N}(\Omega)}=\|\Delta
v\|_{L^{N}(\Omega)}&=\|-K(x)\phi_{u}u+|u|^{p-1}u+h(x)\|_{L^{N}(\Omega)}\\
&\leq r_{1},
\end{align*}
the lemma is proven.
\end{proof}
\textbf{Proof of Theorem \ref{mainthm} completed:}\\
Let $r_{1}>0$ be as in Lemma \ref{r} and define $K= K(r_{1}).$ We
suppose that $\|h\|_{L^{N}(\Omega)} \leq \frac{r_{1}}{2}$.
\\ Consider the following minimizing problem $$\beta=\displaystyle
\inf_{u\in E}I_{K}(u).$$ Hence, by definition of $\psi_{K}$, we
deduce that $$\beta=\displaystyle \inf_{u\in K}I_{K}(u).$$ On the
other hand, using Lemma \ref{K}, we infer that $\beta >-\infty.$
Take $0< e\in K$. For $t\in [0,1]$, we have that $te\in K$ and
therefore
$$I_{K}(te)\leq t(t \displaystyle \int_{\Omega}|\nabla e|^{2}dx+t^{3}\displaystyle \int_{\Omega} \phi_{e}e^{2}dx-
t^{p}\displaystyle \int_{\Omega} |e|^{p+1}dx-\displaystyle
\int_{\Omega}  h(x)e dx).$$ Since $h,e>0$, we can conclude that
$\beta<0$. Now suppose that $(u_{n})$ is a sequence in $E$ such that
$I_{K}(u_{n})\rightarrow \beta$. So the sequence is bounded and we
can conclude by the definition of $I_{K}$ that the sequence is
bounded in $W^{2,N}(\Omega)$. Using standard results in Sobolev
spaces, after passing to a subsequence if necessary, there exists
$u_{1}\in K$ such that $u_{n}\rightharpoonup u_{1}$ in
$W^{2,N}(\Omega)$ and strongly in $E$. Therefore,
$$\beta=I_{K}(u_{1})<0.$$ Then, by Lemma \ref{R}, we conclude that
$u_{1}$ is a nontrivial critical point of $I_{K}$. Now, by Lemma
\ref{ii} together with the fact that $u_{1}\in K(r_{2})$ we obtain
that there exists $u_{2}\in K$ such that $$-\Delta
u_{2}=-\phi_{u_{1}}u_{1}+|u_{1}|^{p-1}u_{1}+h.$$\\
Combining the above pieces of informations and applying Theorem
\ref{thmcri}, we conclude that $u_{1}$ is a nontrivial solution of
problem \eqref{main}.

\end{document}